\newcommand{\N}{{\mathbb N}}
\newcommand{\Z}{{\mathbb Z}}
\newcommand{\bx}{\hfill{$\Box $ }}
\newcommand{\Img}{\textrm{Im}\;}
\newcommand{\mod}{\textrm{mod}\;}
\newtheorem{lemma}{Lemma}[section]
\newtheorem{corollary}{Corollary}[section]
\newtheorem{proposition}{Proposition}[section]
\newtheorem{theorem}{Theorem}[section]
\newtheorem{remark}{Remark}[section]
\newtheorem{example}{Example}[section]
\title{Some remarks on Euler's totient function}
\author{R.Coleman\\Laboratoire Jean Kuntzmann\\Universit\'e de Grenoble}
\date{}
\begin{document}
\maketitle

\begin{abstract} The image of Euler's totient function is composed of the number 1 and even numbers. However, many even numbers are not in the image. We consider the problem of finding those even numbers which are in the image and those which are not.

If an even number is in the image, then its preimage can have at most half its elements odd. However, it may contain only even numbers. We consider the structure of the preimage of certain numbers in the image of the totient function.

Classification: 11A25, 11A41.
\end{abstract}

Euler's totient function $\phi$ is the function defined on the positive natural numbers $\N^*$ in the following way: if $n\in \N^*$, then $\phi (n)$ is the cardinal of the set 
$$
\{x\in \N^*:1\leq x\leq n, \gcd (x,n)=1\}.
$$
Thus $\phi (1)=1$, $\phi (2)=1$, $\phi (3)=2$, $\phi (4)=2$, and so on. The aim of this article is to study certain aspects of the image of the function $\phi$.\\

\section{Elementary properties} Clearly $\phi (p)=p-1$, for any prime number 
$p$ and, more generally, if $\alpha \in \N^*$, then $\phi (p^{\alpha})=p^{\alpha }- p^{\alpha -1}$. This follows from the fact that the only numbers which are not coprime with $p^{\alpha}$ are multiples of $p$ and there are $p^{\alpha -1}$ such multiples $x$ with $1\leq x\leq p^{\alpha}$.\\

It is well-known that $\phi$ is multiplicative, i.e., if $m$ and $n$ are 
coprime, then $\phi (mn) = \phi (m)\phi (n)$. If $n\geq 3$ and the prime 
decomposition of $n$ is 
$$
n = p_1^{\alpha _1}\ldots  p_s^{\alpha _s},
$$
then from what we have seen 
$$
\phi (n) = \prod _{i=1}^s(p_i^{\alpha _i}-p^{\alpha _i-1}) = 
n\prod _{i=1}^s(1-\frac{1}{p_i}).
$$
Notice that 
$$
p^{\alpha }- p^{\alpha -1}=p^{\alpha -1}(p-1).
$$
This implies that, if $p$ is odd or $p=2$ and $\alpha >1$, then 
$p^{\alpha }- p^{\alpha -1}$ is even. Hence, for $n\geq 3$, $\phi (n)$ is even.
Thus the image of $\phi$ is composed of the number $1$ and even numbers.\\

The following property is simple but very useful.

\begin{proposition}\label{prop1} If $p,m\in \N^*$, with $p$ prime, and $n=pm$, then $\phi (n)=(p-1)\phi (m)$, if $\gcd (p,m)=1$, and $\phi (n)=p\phi (m)$, if $\gcd (p,m)\neq 1$.
\end{proposition}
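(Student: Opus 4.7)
The plan is to split into the two cases the proposition distinguishes and, in each, reduce to facts already established in the excerpt, namely multiplicativity of $\phi$ and the explicit formula $\phi(p^{\alpha}) = p^{\alpha} - p^{\alpha-1} = p^{\alpha-1}(p-1)$.

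For the first case, $\gcd(p,m)=1$, I would simply invoke multiplicativity: since $p$ and $m$ are coprime, $\phi(n) = \phi(pm) = \phi(p)\phi(m) = (p-1)\phi(m)$, using $\phi(p) = p-1$ for a prime $p$. This case is essentially immediate.

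For the second case, $\gcd(p,m)\neq 1$, the key observation is that, $p$ being prime, this condition is equivalent to $p \mid m$. Hence I can write $m = p^{\alpha}k$ with $\alpha \geq 1$ and $\gcd(p,k) = 1$, so that $n = pm = p^{\alpha+1}k$ with $\gcd(p^{\alpha+1},k) = 1$. Applying multiplicativity to both $m$ and $n$ gives $\phi(m) = \phi(p^{\alpha})\phi(k)$ and $\phi(n) = \phi(p^{\alpha+1})\phi(k)$, so the claim reduces to showing $\phi(p^{\alpha+1}) = p\,\phi(p^{\alpha})$. This is an immediate computation from the prime-power formula, since
$$
\phi(p^{\alpha+1}) = p^{\alpha+1} - p^{\alpha} = p\bigl(p^{\alpha} - p^{\alpha-1}\bigr) = p\,\phi(p^{\alpha}).
$$

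There is no real obstacle here; the only point that needs a brief justification is the reduction $\gcd(p,m)\neq 1 \iff p\mid m$, which follows because the divisors of the prime $p$ are $1$ and $p$. Everything else is a direct application of results already recorded before the proposition.
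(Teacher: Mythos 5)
Your proof is correct and follows essentially the same route as the paper: case 1 by multiplicativity with $\phi(p)=p-1$, and case 2 by writing $m=p^{\alpha}k$ with $\gcd(p,k)=1$ and comparing $\phi(p^{\alpha+1})$ with $\phi(p^{\alpha})$ via the prime-power formula. The paper phrases the final step as computing both $\phi(n)$ and $\phi(m)$ explicitly rather than isolating the identity $\phi(p^{\alpha+1})=p\,\phi(p^{\alpha})$, but this is only a cosmetic difference.
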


\begin{proof} If $\gcd (p,m)=1$, then we have
$$
\phi (n) =\phi (p)\phi (m) = (p-1)\phi (m).
$$
Now suppose that $\gcd (p,m)\neq 1$. We may write $m=p^{\alpha}m'$, with 
$\alpha \geq 1$ and $\gcd (p,m')=1$. Thus 
$$
\phi (n) = \phi (p^{\alpha +1})\phi (m') = p^{\alpha}(p-1)\phi (m').
$$
However 
$$
\phi (m) = p^{\alpha -1}(p-1)\phi (m')
$$
and so $\phi (n)= p\phi (m)$.\bx
\end{proof}

\begin{corollary} \label{cor1a} $\phi (2m)=\phi (m)$, if and
only if $m$ is odd.
\end{corollary}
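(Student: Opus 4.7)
The plan is to apply Proposition \ref{prop1} directly, with $p=2$ and $n=2m$, and read off the two directions from the two cases of the proposition.

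First, for the ``if'' direction, I would assume $m$ is odd, so that $\gcd(2,m)=1$. The first case of Proposition \ref{prop1} then gives $\phi(2m)=(2-1)\phi(m)=\phi(m)$, which is exactly what we want.

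For the ``only if'' direction (equivalently, the contrapositive), I would assume $m$ is even, so that $\gcd(2,m)=2\neq 1$. The second case of Proposition \ref{prop1} then gives $\phi(2m)=2\phi(m)$. Since $\phi(m)\geq 1$ for every $m\in\N^{*}$, we have $2\phi(m)>\phi(m)$, so $\phi(2m)\neq\phi(m)$.

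There is no real obstacle here: the statement is a direct specialisation of the preceding proposition to $p=2$, and the only thing worth noting explicitly is that $\phi(m)\neq 0$ so that the factor of $2$ in the second case actually produces a strictly larger value. The whole proof is two or three lines.
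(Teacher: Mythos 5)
Your proof is correct and follows exactly the same route as the paper: apply Proposition \ref{prop1} with $p=2$, splitting on whether $m$ is odd or even. Your added remark that $\phi(m)\geq 1$ (so $2\phi(m)\neq\phi(m)$) is a small point the paper leaves implicit, but otherwise the two arguments coincide.
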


\begin{proof} If $m$ is odd, then $\gcd (2,m)=1$ and so $\phi
(2m)=(2-1)\phi (m)=\phi (m)$. If $m$ is even, then $\gcd (2,m)\neq 1$, hence $\phi (2m) = 2\phi (m)\neq \phi (m)$. This ends the proof.
\end{proof}

\section{Bounds on $\phi ^{-1}(m)$} Let $m\in \N^*$ and consider the preimage $\phi ^{-1}(m)$ of $m$, i.e.
$$
\phi ^{-1}(m) = \{n\in \N^*:\phi (n)=m\}.
$$
There are several questions we might ask. First, is the set $\phi ^{-1}(m)$
empty and, if not, is it finite. It is easy to see that $\phi ^{-1}(1)=\{1,2\}$ 
and as we have already seen, $\phi ^{-1}(m)$ is empty if $m$ is odd and $m\neq 1$. It remains to consider the case where $m$ is even. The following result, due to Gupta \cite{gupta}, helps us to answer these questions.
 
\begin{proposition} Suppose that $m$ is an even number and let us set
$$
A(m) = m\prod _{p-1|m}\frac{p}{p-1},
$$
where $p$ is prime. If $n\in \phi ^{-1}(m)$, then $m<n\leq A(m)$. 
\end{proposition}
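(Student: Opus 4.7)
The proof splits into the easy lower bound and the more interesting upper bound. The plan is to exploit the multiplicative formula for $\phi$ to express $n$ in terms of $m=\phi(n)$ and the set of prime divisors of $n$, and then to observe that this set is constrained by $m$.

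The lower bound $m<n$ is immediate. Indeed, since $m$ is even we have $m\geq 2$, so $n\neq 1$; and for any $n\geq 2$ the inequality $\phi(n)\leq n-1<n$ holds (only integers in $\{1,\dots,n-1\}$ can be coprime with $n$), so $m=\phi(n)<n$.

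For the upper bound, the starting point is the identity
$$
\phi(n)=n\prod_{p\mid n}\Big(1-\frac{1}{p}\Big),
$$
which I would rewrite as
$$
n=m\prod_{p\mid n}\frac{p}{p-1}.
$$
The key observation is then: every prime $p$ dividing $n$ satisfies $p-1\mid m$. This comes straight from the multiplicativity of $\phi$: if $p^{\alpha}$ is the exact power of $p$ in $n$, then $\phi(n)$ contains $\phi(p^{\alpha})=p^{\alpha-1}(p-1)$ as a factor, so in particular $p-1\mid\phi(n)=m$. This is the only non-trivial step, but once stated it is transparent.

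Having established this, the set of primes dividing $n$ is contained in $\{p \text{ prime}:p-1\mid m\}$, which is finite since $p-1\mid m$ forces $p\leq m+1$ (so $A(m)$ itself is well-defined). Since every factor $\frac{p}{p-1}$ exceeds $1$, enlarging the index set of the product can only increase it, yielding
$$
n=m\prod_{p\mid n}\frac{p}{p-1}\;\leq\; m\prod_{p-1\mid m}\frac{p}{p-1}=A(m),
$$
which is the desired bound. I do not anticipate any real obstacle; the whole argument is a direct unpacking of the Euler product formula together with the divisibility remark $p\mid n\Rightarrow p-1\mid\phi(n)$.
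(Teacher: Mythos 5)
Your proof is correct and follows essentially the same route as the paper: rewrite the Euler product as $n=m\prod_{p\mid n}\frac{p}{p-1}$, observe that $p\mid n$ forces $p-1\mid m$, and bound the product by enlarging the index set. The only differences are that you spell out two details the paper leaves implicit, namely why $\phi(n)<n$ and why extending the product over all primes with $p-1\mid m$ can only increase it.
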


\begin{proof} Clearly, if $\phi (n)=m$, then $m<n$. On the other
hand, if $\phi (n) =m$ and $n = p_1^{\alpha _1}\ldots  p_s^{\alpha _s}$, then
$$
m = n\prod _{i=1}^s\frac{p_i-1}{p_i} \Longrightarrow 
	n=m\prod _{i=1}^s\frac{p_i}{p_i-1}.
$$
However, if $p|n$, then from the first section we know that $p-1|\phi (n)$ and it follows that, for each $p_i$, $p_i-1|m$. Hence $n\leq A(m)$.
\end{proof}

The proposition shows that the preimage of an element $m\in \N^*$ is always
finite. It also enables us to determine whether a given number
$m$ is in the image of $\phi$: we may determine $A(m)$ and then
calculate $\phi (n)$ for all integers $n$ in the interval $(m,A(m)]$. However, it should be noticed that the calculation of $A(m)$ depends on a knowledge of the factorization of $m$, which for large numbers may prove from a practical point of view difficult or even possible.\\

\begin{example} The divisors of $4$ are 1, 2 and 4. Adding 1 
to each of these numbers we obtain 2, 3 and 5, all of which are prime numbers. 
Thus 
$A(4) = 4\cdot\frac{2}{1}\cdot\frac{3}{2}\cdot\frac{5}{4} = 15$.
To find the inverse image of 4, it is sufficient to consider numbers between 5
and 15. In fact, $\phi ^{-1}(4)=\{5, 8, 10, 12\}$.
\end{example}

\begin{example} The divisors of 14 are 1, 2, 7 and 14. However, if we add 1 to each of these numbers we only find a prime number in the first two cases. Thus
$A(14)=14\cdot\frac{2}{1}\cdot\frac{3}{2}=42$. If we consider the numbers $n$ between 15 and 42, we find $\phi (n)\neq 14$ and so $\phi ^{-1}(14)=\emptyset$.
\end{example}

\begin{remark} The example $m=14$ shows that there are even
numbers which are not in the image of $\phi$.
\end{remark}

When deciding whether a number $m$ is in the image of $\phi$, it is not necessary to consider all numbers between $m$ and $A(m)$. In fact we only need to consider those numbers $n$ such that, for any prime number $p$, we have 
$$
p|n \Longrightarrow p-1|m.
$$
 
$A$ is a function defined on $\{1\}\cup 2N^*$. Let us look at the first values
of $A$ with the corresponding values of $\phi$ (when defined):
\begin{center}
\begin{tabular}{|c|c|c|}\hline
$m$ & $A(m)$ & $\phi (A(m))$\\ \hline
$1$ & $2$    & $1$ \\ \hline	 
$2$ & $6$    & $2$ \\ \hline
$4$ & $15$    & $8$ \\ \hline
$6$ & $21$    & $12$ \\ \hline
$8$ & $30$    & $8$ \\ \hline
$10$ & $33$    & $20$ \\ \hline
$12$ & $\frac{455}{8}$    & $-$ \\ \hline
$14$ & $42$    & $20$ \\ \hline
\end{tabular}
\end{center}
\vspace{3mm}
From the table we see that $A(m)$ may be odd, even or a fraction and that $A$ is
not increasing ($A(12)>A(14)$). Also, we may have $\phi (A(m))=m$ or 
$\phi (A(m))>m$.\\

It is interesting to consider the special case $m=2^k$. The only divisors of
$2^k$ are $1,2,2^2 \ldots ,2^k$. If we add 1 to each of these numbers, we obtain
numbers of the form $2^i+1$, where $0\leq i\leq k$. For $i\geq 1$, such
numbers are prime only if $i$ is a power of $2$. A number of the form
$F_n=2^{2^n}+1$ is said to be a Fermat number. Therefore
$$
A(2^k) = 2^k\cdot 2\cdot \prod \frac{F_n}{F_n-1}=
	2^{k+1}\cdot\prod \frac{F_n}{F_n-1},
$$
where the product is taken over Fermat numbers $F_n$ which are prime and such 
that $F_n-1|2^k$. For example, if $m=2^5=32$, then $F_0$, $F_1$ and $F_2$ are
the only Fermat numbers $F_n$ such that $F_n-1|m$. In addition these Fermat
numbers are all prime. Therefore 
$
A(m) = 64\cdot \frac{3}{2}\frac{5}{4}\cdot \frac{17}{16} = \frac{255}{2}
$. The Fermat numbers $F_0, \ldots , F_4$ are all prime; however $F_5,\ldots ,F_{12}$ are 
composite numbers and it has been shown that for many
other numbers $n$, $F_n$ is composite. In fact, up till now no Fermat number
$F_n$ with $n\geq 5$ has been found to be prime. If there are no prime Fermat
numbers with $n\geq 5$, then for $2^k\geq 2^{16}$ we have 
$$
A(2^k)=2^{k+1}\cdot \frac{F_0}{2}\cdot\frac{F_1}{2^2}\cdot\frac{F_2}{2^4}
	\cdot\frac{F_3}{2^8}\cdot\frac{F_4}{2^{16}}=2^{k-30}F_0F_1F_2F_3F_4, 
$$
which is an integer for $k\geq 30$.\\

Before closing this section, let us consider the upper bound on odd elements of
$\phi ^{-1}(m)$. From Corollary \ref{cor1a}, we know that if $n$ is odd and $n\in
\phi ^{-1}(m)$, then $2n\in \phi ^{-1}(m)$, therefore an upper bound on odd
elements of $\phi ^{-1}(m)$ is $\frac{A(m)}{2}$. We should also notice that at
least half of the elements in $\phi ^{-1}(m)$ are even. In fact, $\phi ^{-1}(m)$
may be non-empty and contain very few odd numbers, or even none. For example, 
the only odd number in $\phi ^{-1}(8)$ is $15$ and $\phi ^{-1}(2^{32})$ contains
only even numbers (see Theorem \ref{th5a} further on).

\section{The case $m=2p$} We now consider in some detail the case where $p$ is
prime and $m=2p$.

\begin{theorem} \label{thm3a} If $p$ is a prime number, then $2p$ lies in the image of $\phi$ if and only if $2p+1$ is prime.
\end{theorem}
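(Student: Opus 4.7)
The plan is to prove both implications directly. The reverse direction is essentially free: if $2p+1$ is prime, then by the formula $\phi(q) = q-1$ for a prime $q$, we get $\phi(2p+1) = 2p$, so $2p$ lies in the image. All the content is in the forward direction, where I assume some $n$ satisfies $\phi(n) = 2p$ and deduce that $2p+1$ is prime.

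The central observation, already highlighted at the end of Section 2, is that every prime $q$ dividing such an $n$ must satisfy $q-1 \mid \phi(n) = 2p$. Since the divisors of $2p$ are exactly $1, 2, p, 2p$, the only candidates are $q \in \{2,\, 3,\, p+1,\, 2p+1\}$. For $p$ an odd prime with $p \geq 3$, the number $p+1$ is even and at least $4$, so not prime, which trims the list to $\{2,\, 3,\, 2p+1\}$. The case $p = 2$ needs no argument, since $2p+1 = 5$ is prime.

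I would then argue by contradiction: suppose $p$ is odd and $2p+1$ is composite. Then every prime divisor of $n$ lies in $\{2, 3\}$, so $n = 2^{a} 3^{b}$ for some $a, b \geq 0$. A short case analysis according to whether $a$ and $b$ vanish, using Proposition~\ref{prop1} or the multiplicativity of $\phi$, gives $\phi(n) \in \{1,\, 2^{a-1},\, 2\cdot 3^{b-1},\, 2^{a} 3^{b-1}\}$. Setting each of these equal to $2p$ forces $p$ to be a power of $2$ or a power of $3$; since $p$ is an odd prime, this forces $p = 3$. But then $2p+1 = 7$ is prime, contradicting the assumption that $2p+1$ is composite. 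Hence $2p+1$ must be prime after all.

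The only real obstacle is keeping careful track of the small cases $p = 2$ and $p = 3$, where the putative divisors $p+1$ and $2p+1$ behave atypically and must be separated from the generic odd case $p \geq 5$. Once those edge cases are isolated, the argument reduces to the elementary divisibility observation about $q-1 \mid 2p$ together with a one-line computation of $\phi(2^{a} 3^{b})$.
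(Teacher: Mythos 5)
Your proof is correct, but it is organized differently from the one in the paper. Both arguments turn on the same key fact --- every prime $q$ dividing $n$ satisfies $q-1\mid\phi(n)=2p$, so $q\in\{2,3,p+1,2p+1\}$ and $p+1$ is ruled out --- but the paper proceeds directly: it extracts one odd prime factor $q$ of $n$, splits according to whether $q$ divides $n$ to the first power or higher (the two cases of Proposition~\ref{prop1}), and in each case identifies $q=2p+1$ or forces $p=3$ with $2p+1=7$ prime. You instead argue by contradiction: assuming $2p+1$ composite, you conclude that $n$ is $\{2,3\}$-smooth, i.e.\ $n=2^a3^b$, and then enumerate the four possible values $\phi(n)\in\{1,\,2^{a-1},\,2\cdot3^{b-1},\,2^a3^{b-1}\}$ to see that $2p$ can only arise when $p=3$, whence $2p+1=7$ is prime after all. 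Your route collapses the paper's two multiplicity cases into a single explicit computation and makes transparent why $p=3$ is the exceptional value (it is the only odd prime $p$ for which $2p$ is a totient of a $\{2,3\}$-smooth number, e.g.\ $\phi(9)=\phi(18)=6$); the paper's route is direct rather than by contradiction and has the small advantage of actually locating the factor $2p+1$ inside $n$. One cosmetic point: your phrase ``forces $p$ to be a power of $2$ or a power of $3$'' is slightly loose for the case $\phi(n)=2^a3^{b-1}$, where one gets $p=2^{a-1}3^{b-1}$ and must invoke primality and oddness of $p$ to conclude $a=1$ and $p=3^{b-1}=3$; spelling that out would make the step airtight.
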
 

\begin{proof} If $2p+1$ is prime, then $\phi (2p+1)=2p$ and so 
$2p\in \Img\phi$.

Suppose now that $2p\in \Img\phi$. If $p=2$, then $2p=4\in \Img\phi$,
car $\phi (5)=4$ and $2p+1=5$, which is prime. Now suppose that $p$ is an odd
prime. As $2p\in\Img \phi$, there exists $n$ such that $\phi (n)=2p$. If $n=2^k$, then $2^{k-1}=\phi (2^k)=2p$, which is clearly impossible, because $p$ is an odd number greater than 1. Hence there is an odd prime $q$ such that $n=qs$. 
There are two cases to consider: 1. $q\not |s$ and 2. $q|s$. We will handle each 
of these cases in turn, using Proposition \ref{prop1}.   

{\bf Case 1.} We have $2p=(q-1)\phi (s)$ which implies that $q-1|2p$. The 
only divisors of $2p$ are $1$, $2$, $p$ and $2p$. As $q\neq 2$, the possible 
values for $q-1$ are 2, $p$ or $2p$ and hence for $q$ are 3, $p+1$ or $2p+1$. However, $p+1$ is not possible, because $p+1$ is even and hence not prime. If $q=3$, then $\phi (s)=p$. As $p$ is an odd number greater than 1, this is not possible. It follows that $q=2p+1$ and so $2p+1$ is prime.

{\bf Case 2.} Here we have $2p=q\phi (s)$ and so $q|2p$. The possible 
values of $q$ are 1, 2 , $p$ or $2p$. However, as $q$ is an odd prime, we must 
have $q=p$. This implies that $\phi (s)=2$. Also, $q|s$ and so $q-1|\phi (s)$. 
It follows that $p=q=3$ and hence $2p+1=7$, a prime number.

As in both cases $2p+1$ is prime, we have proved the result.
\end{proof}

\begin{remark} A prime number of the form $2p+1$, with $p$ prime, is
said to be a safe prime. In this case, the prime number $p$ is said to be a 
Sophie Germain prime. Notice that if $q$ is a safe prime et $q\neq 5$, then $q\equiv 3(\mod 4)$. (If $q\equiv 1(\mod 4)$, then $2p\equiv 0(\mod 4)$, which implies that $2|p$, which is of course impossible.)
\end{remark}

\begin{remark} The theorem does not generalize to odd numbers. Certainly,
if $s$ is odd and $2s+1$ is prime, then $2s\in \Img \phi$. However, it may be so
that $2s+1$ is not prime and $2s\in \Img\phi$. For example, $54=\phi (81)$ and
so $54\in\Img\phi$. However, $54=2\cdot 27$ and $2\cdot 27+1=55$, which is not
prime.
\end{remark}

\begin {remark} Let us now consider $\phi ^{-1}(2p)$. From Theorem 
\ref{thm3a}, the set $\phi ^{-1}(2p)$ is empty if $2p+1$ is not prime. If 
$2p+1$ is prime, then we have
$$
A(2p) = 2p\cdot \frac{2}{1}\cdot\frac{3}{2}\cdot\frac{2p+1}{2p} = 6p+3.
$$
Hence, if $n\in \phi ^{-1}(2p)$, then $2p<n\leq 6p+3$. It is worth noticing that
$$
\phi (6p+3) = \phi (3)\phi (2p+1) = 2\cdot 2p = 4p.
$$
We will study the set $\phi ^{-1}(2p)$ in more detail further on.
\end{remark}

\begin{corollary} If $2p\in \Img \phi$, then $2^kp\in \Img \phi$, for $k\geq 1$.
\end{corollary}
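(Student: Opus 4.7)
The plan is to produce an explicit preimage. By Theorem \ref{thm3a}, the hypothesis $2p\in\Img\phi$ tells us that $q:=2p+1$ is prime, so in particular $q$ is odd and hence coprime to every power of $2$. This is really all the arithmetic input we need.

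For a given $k\geq 1$, I would test the candidate $n_k = 2^k\,(2p+1)$. Since $\gcd(2^k, 2p+1)=1$, multiplicativity of $\phi$ gives
$$
\phi(n_k) = \phi(2^k)\,\phi(2p+1) = 2^{k-1}\cdot 2p = 2^k p,
$$
where we use $\phi(2^k)=2^{k-1}$ (the formula $\phi(p^\alpha)=p^\alpha-p^{\alpha-1}$ recalled at the start of Section 1) and $\phi(2p+1)=(2p+1)-1=2p$ since $2p+1$ is prime. Hence $2^k p\in\Img\phi$.

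The only minor point worth checking is the boundary case $k=1$, where $\phi(2^1)=1$; the formula still gives $\phi(2(2p+1))=1\cdot 2p=2p=2^1 p$, so no special handling is required. There is no real obstacle here: the whole argument reduces to invoking Theorem \ref{thm3a} to get the prime $2p+1$ and then applying Proposition \ref{prop1} (or equivalently multiplicativity) with the coprime factors $2^k$ and $2p+1$.
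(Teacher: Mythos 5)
Your proof is correct and follows essentially the same route as the paper: invoke Theorem \ref{thm3a} to get that $2p+1$ is prime, then compute $\phi(2^k(2p+1))=\phi(2^k)\phi(2p+1)=2^{k-1}\cdot 2p=2^kp$ by multiplicativity. The only cosmetic difference is that the paper disposes of $k=1$ by noting it is the hypothesis itself, whereas you observe the same formula covers it; both are fine.
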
 

\begin{proof} For $k=1$ the result is already proved, so suppose that
$k\geq 2$. As $2p+1$ is an odd prime, $2^k$ and $2p+1$ are coprime. Therefore
$$
\phi (2^k(2p+1))=\phi (2^k)\phi (2p+1)=2^{k-1}2p=2^kp.
$$
This ends the proof.
\end{proof}

\begin{remark} It may be so that $2p+1$ is not prime but that $2^lp+1$ is prime for some $l\geq 2$. For example, $2.7+1$ is not prime, but $2^2.7+1$ is prime, or $2.17+1$ and $2^2.17+1$ are not prime, but $2^3.17+1$ is prime. In this case $2^kp\in \Img \phi$, for all $k\geq l$. It is natural to ask whether, for any prime $p$, there is a power $2^k$ of $2$ such that $2^kp+1$ is also a prime. This not the case and in fact there is an infinite number of primes $p$, for which there is no such $k$ (see \cite{mendelsohn}). 
\end{remark}

It is interesting to notice that no number of the form $2p+1$, with $p$ prime, is a Carmichael number. This is a simple application of Korselt's criterion (see, for example, \cite{childs}). If $m=2p+1$ is prime, then, by definition, $m$ is not a Carmichael number. If $m$ is composite and a Carmichael number, then for any prime $q$ dividing $m$, the number $q-1$ divides $m-1=2p$. This means that $q\in \{2,3,p+1,2p+1\}$. As $m$ is odd and $p+1$, $2p+1$ are not prime, $q=3$. However, a Carmichael number is square-free and a product of at least three distinct primes. Therefore $m$ is not a Carmichael number.

\section{Sets of elements of the form $2p$} In this section we will need an
important theorem due to Dirichlet. Chapman \cite{chapman} has recently given a relatively elementary proof of this result. 

\begin{theorem} (Dirichlet) If $n\in \N^*$ and $\gcd (a,n)=1$, then there is an infinite number of prime numbers $p$ such that 
$$
p\equiv a (\mod n ).
$$
\end{theorem}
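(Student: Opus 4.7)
The plan is to follow Dirichlet's original analytic strategy, rephrased in the now-standard language of Dirichlet characters and $L$-functions. First I would fix the modulus $n$ and introduce the group $\widehat{G}$ of Dirichlet characters modulo $n$, that is, the group of homomorphisms $\chi:(\Z/n\Z)^*\to \C^*$ extended by zero to integers not coprime to $n$. The crucial ingredient is the orthogonality relation
$$
\frac{1}{\varphi(n)}\sum_{\chi\in\widehat{G}}\overline{\chi(a)}\,\chi(m)=\begin{cases}1 & \text{if } m\equiv a\pmod n,\\ 0 & \text{otherwise,}\end{cases}
$$
which will let me sieve out the primes in the desired residue class.

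Next I would introduce the Dirichlet $L$-functions $L(s,\chi)=\sum_{m\geq 1}\chi(m)m^{-s}$ for $\mathrm{Re}(s)>1$ and exploit their Euler product $L(s,\chi)=\prod_{p}(1-\chi(p)p^{-s})^{-1}$, which is valid because $\chi$ is completely multiplicative. Taking a logarithm and keeping only the leading term in each factor, I would obtain
$$
\log L(s,\chi)=\sum_{p}\frac{\chi(p)}{p^s}+R(s,\chi),
$$
where $R(s,\chi)$ stays bounded as $s\to 1^+$. Combining this with orthogonality gives
$$
\sum_{p\equiv a(\mathrm{mod}\,n)}\frac{1}{p^s}=\frac{1}{\varphi(n)}\sum_{\chi}\overline{\chi(a)}\log L(s,\chi)+O(1).
$$
For the principal character $\chi_0$, $L(s,\chi_0)$ differs from the Riemann zeta function only by finitely many Euler factors, so $\log L(s,\chi_0)\to+\infty$ as $s\to 1^+$. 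Therefore the left-hand side diverges, forcing the progression to contain infinitely many primes, provided the contributions from the non-principal characters remain bounded.

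The entire argument therefore reduces to the non-vanishing statement $L(1,\chi)\neq 0$ for every non-principal character $\chi$; this is the real heart of the matter and the step I expect to be the main obstacle. For non-principal $\chi$, the series $L(s,\chi)$ converges conditionally on $\mathrm{Re}(s)>0$ by Abel summation, so $L(1,\chi)$ at least makes sense. For complex (non-real) characters, $L(1,\chi)=0$ would force $L(1,\overline\chi)=0$ as well, and a short argument using positivity of the Dirichlet series coefficients of $\prod_\chi L(s,\chi)$ rules this out. The genuinely delicate case is that of a real non-principal (quadratic) character, where one vanishing would not be accompanied by a second.

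To handle the quadratic case I would either follow Dirichlet and identify $L(1,\chi)$ with a positive quantity arising from the class number formula for binary quadratic forms of the appropriate discriminant, or use the more modern route: show that $\zeta(s)L(s,\chi)$ is a Dirichlet series with non-negative coefficients whose partial sums grow unboundedly, so the product cannot be entire, ruling out $L(1,\chi)=0$ (which would cancel the simple pole of $\zeta$ at $s=1$). Once the non-vanishing at $s=1$ is in hand, the terms with $\chi\neq\chi_0$ in the displayed identity remain bounded as $s\to 1^+$, the principal term blows up, and infinitely many primes in the progression $a\pmod n$ must exist, completing the proof.
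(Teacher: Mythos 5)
The paper does not prove this statement at all: Dirichlet's theorem is quoted as a known classical result, with a pointer to Chapman's elementary real-variable treatment, and is then used as a black box in Section 4. So there is no in-paper argument to measure your proposal against; what you have written is the standard analytic proof via characters and $L$-functions, which is a perfectly legitimate (indeed the canonical) route.

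As a proof, however, your text is a roadmap rather than an argument, and the one place where you defer is exactly where the entire difficulty of the theorem lives. The orthogonality relation, the Euler product, the expansion $\log L(s,\chi)=\sum_p \chi(p)p^{-s}+R(s,\chi)$ with $R$ bounded, and the divergence of $\log L(s,\chi_0)$ as $s\to 1^+$ are all routine. The theorem is equivalent to the non-vanishing $L(1,\chi)\neq 0$ for non-principal $\chi$, and for real quadratic $\chi$ you only list two strategies (``I would either follow Dirichlet \dots or use the more modern route'') without executing either. In the ``modern route'' the phrasing is also slightly off: the point is not that $\zeta(s)L(s,\chi)$ ``cannot be entire,'' but that if $L(1,\chi)=0$ this product would be analytic on $\mathrm{Re}(s)>0$, hence by Landau's theorem its Dirichlet series (which has non-negative coefficients, with coefficient at least $1$ at every perfect square) would converge there, contradicting divergence at $s=\frac{1}{2}$. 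Similarly, the complex-character case rests on the inequality coming from $\prod_\chi L(s,\chi)\geq 1$ for real $s>1$, which you mention but do not prove. Until the non-vanishing step is actually written out, the proposal establishes nothing beyond what is elementary; with it filled in, the argument is complete and standard.
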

 
Let us consider the prime numbers $p$ in the interval $[1,50]$. There are $15$ such
numbers, namely 
$$
2,3,5,7,11,13,17,19,23,29,31,37,41,43,47.
$$ 
For seven of these numbers, $2p+1$ is prime, i.e. $2,3,5,11,23, 29,41$, and for
the others $2p+1$ is not prime. Therefore
$$
4,6,10,22,46,58,82\in \Img \phi \qquad\mathrm{and}\qquad 14,26,34,38,62,74,86,94\notin \Img
\phi .
$$

It is natural to ask whether there is an infinite number of distinct primes $p$
such that $2p\in \Img \phi$ (resp. $2p\notin \Img \phi$). Our question amounts 
to asking whether there is an infinite number of safe primes, or equivalently an infinite number of Sophie Germain primes. Up till now this question has not been answered. The largest known safe prime, at least up to January, 2007, was found by David Undebakke and is $48047305725.2^{172404}-1$. We can say a lot more concerning numbers of the form $2p$ which are not in the image of $\phi$. 

\begin{theorem} For any odd prime number $p$, there is an infinite set $S(p)$ of  prime numbers $q$ such that $2q\notin \Img \phi$ and $p|2q+1$.
\end{theorem}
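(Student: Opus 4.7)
The plan is to reduce the statement to Dirichlet's theorem via Theorem \ref{thm3a}. Since $q$ will be prime, Theorem \ref{thm3a} tells us that $2q \notin \Img\phi$ precisely when $2q+1$ is composite. Thus it suffices to exhibit infinitely many primes $q$ such that $p \mid 2q+1$ and $2q+1 > p$, for then $2q+1$ is a proper multiple of the prime $p$ and hence composite.

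To produce such primes, I would set up the appropriate arithmetic progression. The condition $p \mid 2q+1$ is equivalent to $2q \equiv -1 \pmod{p}$. Since $p$ is odd, $2$ is invertible modulo $p$, and a direct check shows that $2 \cdot \frac{p-1}{2} = p-1 \equiv -1 \pmod{p}$, so the condition becomes
$$
q \equiv \frac{p-1}{2} \pmod{p}.
$$
The residue $a = \frac{p-1}{2}$ satisfies $1 \leq a \leq p-1$, so $\gcd(a,p)=1$, and Dirichlet's theorem applies to this progression.

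By Dirichlet, the progression $a, a+p, a+2p, \ldots$ contains infinitely many primes. Let $S(p)$ be the set of primes $q$ in this progression with $q > \frac{p-1}{2}$; this excludes at most one element (the prime $\frac{p-1}{2}$ itself, if it happens to be prime) and so $S(p)$ is still infinite. For every $q \in S(p)$, we have $p \mid 2q+1$ and $2q+1 > p$, so $2q+1$ is a proper multiple of $p$ and thus composite. Applying Theorem \ref{thm3a} to the prime $q$ then gives $2q \notin \Img\phi$, which is what we wanted.

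There is no real obstacle here beyond Dirichlet's theorem itself, which is quoted above. The one small point to watch is ensuring the residue class $\frac{p-1}{2} \pmod p$ is coprime to $p$, and that one can discard the (at most one) prime $q$ for which $2q+1 = p$ without losing infiniteness; both are immediate.
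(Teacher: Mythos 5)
Your proposal is correct and follows essentially the same route as the paper: apply Dirichlet's theorem to the progression $q\equiv \frac{p-1}{2}\ (\mathrm{mod}\ p)$, observe that $2q+1$ is then a multiple of $p$, and conclude via Theorem \ref{thm3a}. You are in fact slightly more careful than the paper in explicitly discarding the one possible prime $q=\frac{p-1}{2}$ for which $2q+1=p$ would be prime rather than composite.
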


\begin{proof} If $p$ is an odd prime, then $\frac{p-1}{2}$ is a
positive integer and $\gcd (\frac{p-1}{2},p)=1$. From Dirichlet's theorem, we know
that there is an infinite number of prime numbers of the form
$q=\frac{p-1}{2}+kp$, with $k\in \Z$. Then 
$$
2q+1 = 2\left(\frac{p-1}{2}+kp\right)+1 = p+2kp = p(1+2k).
$$
From Theorem \ref{thm3a}, we know that $2q\notin \Img \phi$ and clearly
$p|2q+1$.
\end{proof}

\begin{remark} The sets $S(p)$ may have common elements, but in general
are distinct. For example, $14$ is in $S(3)$ and
$S(5)$, but not in $S(7)$, $26$ is in $S(3)$, but not in $S(5)$ and $S(7)$ and
$34$ is in $S(5)$ and $S(7)$, but not in $S(3)$.
\end{remark}

The following result follows directly from the theorem.

\begin{corollary} \label{cor4a} For every odd prime number $p$, there exists an
infinite subset $\tilde{S}(p)$ of $\N^*$ such that
$$
a\equiv -1 (\mod p).
$$
and $a\notin \Img \phi$, when $a\in \tilde{S}(p)$.
\end{corollary}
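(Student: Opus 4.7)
The plan is to obtain $\tilde{S}(p)$ directly as a relabelling of the set $S(p)$ provided by the preceding theorem, with no additional work beyond unpacking the divisibility condition. Specifically, I would set
$$
\tilde{S}(p) = \{\,2q : q \in S(p)\,\}.
$$

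First I would verify the congruence condition. For $q \in S(p)$ the theorem asserts that $p \mid 2q+1$, which is exactly the statement $2q \equiv -1 \pmod{p}$, so every $a = 2q \in \tilde{S}(p)$ satisfies $a \equiv -1 \pmod{p}$ as required. Next I would verify the image condition: the theorem states $2q \notin \operatorname{Im}\phi$ for each $q \in S(p)$, which is precisely $a \notin \operatorname{Im}\phi$. Finally, since $S(p)$ is infinite and the map $q \mapsto 2q$ is injective, $\tilde{S}(p)$ is an infinite subset of $\mathbb{N}^*$.

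There is no real obstacle here; the corollary is essentially a repackaging of the theorem so that the arithmetic progression aspect (residue $-1$ modulo $p$) is highlighted, rather than the factorization aspect ($p \mid 2q+1$). The only thing worth flagging in the write-up is that doubling a prime $q \in S(p)$ keeps the element in $\mathbb{N}^*$ and preserves infiniteness, so no careful enumeration is needed.
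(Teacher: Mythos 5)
Your proposal is correct and matches the paper's intent exactly: the paper gives no explicit proof, stating only that the corollary ``follows directly from the theorem,'' and your construction $\tilde{S}(p)=\{2q:q\in S(p)\}$ is precisely the direct deduction meant, with $p\mid 2q+1$ rewritten as $2q\equiv -1\ (\mathrm{mod}\ p)$. Nothing further is needed.
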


At least for the moment, we cannot say that there is an infinity of odd prime numbers $p$ such $2p\in \Img \phi$. However, we can say that that there is an infinity of odd numbers $s$ such that $2s\in \Img \phi$. Here is a proof. From Dirichlet's theorem, there is an infinity of prime numbers $p$ such that 
$p\equiv 3 (\mod 4)$. Thus, for each of these primes, there exists 
$k\in \Z$ such that 
$$
p = 3 + 4k = 1 + 2(1+2k).
$$
As $1 + 2(1+2k)$ is prime $2(1+2k)\in \Img \phi$ and of course $1+2k$ is odd.

\section{Factorials} Up to here we have concentrated on numbers of the form
$2p$, with $p$ prime. We may consider other subsets of $\N$. One such subset is
that composed of numbers of the form $m=n!$.

\begin{lemma} If $n\geq 3$ and $p_1, \ldots ,p_s$ are the odd prime 
numbers less than or equal to $n$, then there there are integers 
$\alpha \in \N$ and $\alpha _1,\ldots ,\alpha _s\in \N^*$ such that
$$
n! = 2^{\alpha}p_1^{\alpha _1}(p_1-1)\ldots p_s^{\alpha _s}(p_s-1).
$$
\end{lemma}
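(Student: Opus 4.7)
The plan is to reduce the claim to the single divisibility statement
$$P := \prod_{i=1}^s p_i(p_i - 1) \ \Big|\ n!,$$
from which the factorization follows almost immediately. Once this divisibility is known, the quotient $Q = n!/P$ is a positive integer whose prime divisors must lie among those of $n!$, and hence among $\{2,p_1,\ldots,p_s\}$. Writing $Q = 2^{\alpha}\prod_{i=1}^s p_i^{\beta_i}$ with $\alpha,\beta_i \in \N$ and multiplying back by $P$ yields
$$n! = P\cdot Q = 2^{\alpha}\prod_{i=1}^s p_i^{\beta_i+1}(p_i-1),$$
so setting $\alpha_i := \beta_i + 1 \in \N^*$ gives the required identity.

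To establish $P \mid n!$, I would argue that the $2s$ numbers
$$p_1,\ldots,p_s,\quad p_1-1,\ldots,p_s-1$$
are pairwise distinct elements of $\{2,3,\ldots,n\}$. Each $p_i$ is an odd prime with $3 \leq p_i \leq n$, and each $p_i - 1$ is an even integer with $2 \leq p_i - 1 \leq n-1$. The $p_i$ are pairwise distinct as primes; the $p_i - 1$ are pairwise distinct because the $p_i$ are; and no $p_j$ coincides with any $p_i - 1$ because $p_j$ is odd while $p_i - 1$ is even. Consequently $P$ is a product of distinct factors drawn from $\{1,2,\ldots,n\}$, and so divides $n! = 1\cdot 2\cdots n$.

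The only mildly delicate ingredient is the parity observation in the distinctness check; everything else is mechanical. I do not anticipate a genuine obstacle, and in fact this approach gives a uniform proof that requires no case distinction on $n$ or induction.
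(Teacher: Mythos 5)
Your proof is correct and follows essentially the same route as the paper: both arguments pull the $2s$ distinct factors $p_1,\ldots,p_s,p_1-1,\ldots,p_s-1$ out of $n!$ and observe that the remaining cofactor has all its prime divisors among $\{2,p_1,\ldots,p_s\}$. The only difference is cosmetic --- you make explicit the pairwise-distinctness (parity) check that the paper leaves implicit when it writes $n!=\prod_i (p_i-1)p_i\prod_j m_j$.
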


\begin{proof} We can write 
$$
n! = \prod _{i=1}^s(p_i-1)p_i\prod m_j,
$$
where the $m_j$ are positive integers, with $m_j\leq n$ and $m_j\neq p_i$ and
$m_j\neq p_i-1$ for any $i$. Each $m_j$ can be written
$$
m_j = 2^{\beta}p_1^{\beta _1}\ldots p_s^{\beta _s},
$$
where $\beta ,\beta _1,\ldots ,\beta _s\in \N$. The result now follows.
\end{proof}

\begin{theorem} For all $n\in\N$, $n!$ is in the image of $\phi$.
\end{theorem}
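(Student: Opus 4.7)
The plan is to build an explicit preimage of $n!$ under $\phi$ directly from the factorization supplied by the preceding lemma.

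First I would dispose of the small cases $n\in\{0,1,2\}$ by hand: $0!=1!=\phi(1)$ and $2!=\phi(3)$. For the substantive case $n\geq 3$, I would apply the lemma to write
$$
n! = 2^{\alpha}\, p_1^{\alpha_1}(p_1-1)\cdots p_s^{\alpha_s}(p_s-1),
$$
where $p_1,\ldots,p_s$ are the odd primes $\le n$, $\alpha\in\N$ and each $\alpha_i\ge 1$. The key observation is that each odd factor $p_i^{\alpha_i}(p_i-1)$ has exactly the shape of $\phi(p_i^{\alpha_i+1})$, since $\phi(p^{k})=p^{k-1}(p-1)$ for any prime power. Similarly, the leftover power of $2$ equals $\phi(2^{\alpha+1})=2^{\alpha}$.

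This suggests the candidate
$$
N = 2^{\alpha+1}\prod_{i=1}^{s} p_i^{\alpha_i+1}.
$$
Because the prime factors $2,p_1,\ldots,p_s$ of $N$ are pairwise distinct, the multiplicativity of $\phi$ applies, giving
$$
\phi(N) = \phi(2^{\alpha+1})\prod_{i=1}^s \phi(p_i^{\alpha_i+1}) = 2^{\alpha}\prod_{i=1}^{s} p_i^{\alpha_i}(p_i-1) = n!.
$$
Hence $n!\in\Img\phi$, which completes the argument.

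There is no real obstacle here beyond spotting the right ansatz: the lemma was tailored so that every odd factor appearing in the decomposition of $n!$ is already of the form $\phi(p^{k})$, and the leftover power of $2$ is absorbed by raising the exponent of $2$ by one. The only mild subtlety to double-check is the edge case $\alpha=0$, where $\phi(2^{\alpha+1})=\phi(2)=1=2^{0}$, so the formula is still valid.
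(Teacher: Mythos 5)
Your proof is correct and follows essentially the same route as the paper: both invoke the lemma's factorization of $n!$ and take $N=2^{\alpha+1}p_1^{\alpha_1+1}\cdots p_s^{\alpha_s+1}$ as the explicit preimage, using multiplicativity of $\phi$ on prime powers. Your explicit check of the edge case $\alpha=0$ is a small addition the paper leaves implicit, but the argument is the same.
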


\begin{proof} As $0!=1!=1$ and $2!=2$, the result is true for
$n=0,1,2$. From the lemma, for $n\geq 2$,
$$
n! =  2^{\alpha}p_1^{\alpha _1}(p_1-1)\ldots p_s^{\alpha _s}(p_s-1)
$$
and so
$$
n! = \phi (2^{\alpha +1}p_1^{\alpha _1+1}\ldots p_s^{\alpha _n+1}).
$$
This finishes the proof.
\end{proof}

\section{Structure of $\phi ^{-1}(m)$} If $n$ is an odd solution of the 
equation $\phi (n)=m$, then $2n$ is also a solution (Corollary \ref{cor1a}). It
follows that the equation can have at most half its solutions odd. It is 
natural to look for cases where there are exactly the same number of odd and 
even numbers of solutions.\\

First let us consider the case where $m=2p$ and $p$ is an odd prime.
If $p=3$, then $6<n\leq 21$. A simple check shows that $\phi
^{-1}(6)=\{7,9,14,18\}$. There are two odd and two even solutions of the
equation $\phi (n)=6$.

\begin{proposition} \label{prop4a} If $p$ is a prime number such that $p\geq 5$ and $2p+1$ is 
prime, then $\phi ^{-1}(2p)$ contains exactly one odd and one even element,
namely $2p+1$ and $4p+2$.
\end{proposition}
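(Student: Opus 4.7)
The plan is to exploit the observation made just after Gupta's proposition: if $\phi(n)=2p$ and $q$ is any prime factor of $n$, then $q-1\mid 2p$. Since $p$ is prime, the divisors of $2p$ are exactly $1,2,p,2p$, so the candidate primes are $q\in\{2,3,p+1,2p+1\}$. The hypothesis $p\geq 5$ immediately kills $p+1$, because $p+1$ is then even and greater than $2$, hence composite. Consequently, every $n\in\phi^{-1}(2p)$ must have the form
$$
n = 2^{a}\cdot 3^{b}\cdot (2p+1)^{c}
$$
for some $a,b,c\in\N$.

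From here I would split into three cases according to the exponent $c$. If $c=0$, then $\phi(n)$ is a product involving only the primes $2$ and $3$, so it cannot equal $2p$ as soon as $p\geq 5$. If $c\geq 2$, then $(2p+1)^{c-1}\cdot 2p$ already divides $\phi(n)$, making $\phi(n)>2p$. Only $c=1$ survives, and the equation $\phi(n)=2p$ reduces to $\phi(2^{a}\cdot 3^{b})=1$, which forces $2^{a}\cdot 3^{b}\in\{1,2\}$. This yields exactly the two candidates $n=2p+1$ and $n=2(2p+1)=4p+2$, both of which do satisfy $\phi(n)=2p$ (the second by Corollary \ref{cor1a}).

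The only slightly delicate point is the elimination of $p+1$ from the list of admissible primes, since this is the place where the hypothesis $p\geq 5$ is used; the case $p=3$ must be excluded precisely because there $p+1=4$ contributes the extra prime $2$ in a nontrivial way and produces the additional preimages $7$ and $9$ observed in the discussion preceding the proposition. Once this is in hand, the rest is a routine case analysis on the exponent of $2p+1$, with no computational obstacle.
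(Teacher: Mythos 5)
Your three-way case analysis on the exponent $c$ of $2p+1$ is correct and complete, and it is essentially the paper's own argument: the paper splits on whether $2p+1$ divides $n$ and then disposes of $n=2^{\alpha}3^{\beta}$ by bounding the exponents and invoking $n>2p\geq 10$, whereas your case $c=0$ is settled more directly by noting that $\phi(2^{a}3^{b})$ has no prime factor outside $\{2,3\}$. The one thing to fix is your closing commentary, which mislocates the role of the hypothesis $p\geq 5$: for \emph{every} odd prime $p$, including $p=3$, the number $p+1$ is even and greater than $2$, hence composite, so its elimination costs nothing and uses no hypothesis. Where $p\geq 5$ genuinely enters is your case $c=0$: for $p=3$ one has $2p=6=2\cdot 3$, which \emph{is} of the form $2^{x}3^{y}$ (indeed $\phi(9)=\phi(18)=6$), and it is this, not anything about $p+1=4$, that produces the two extra preimages $9$ and $18$ of $6$ --- not ``$7$ and $9$'' as you wrote, since $7=2p+1$ and $14=4p+2$ are the expected elements.
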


\begin{proof} We have already seen that, if $\phi (n)=2p$, then $2p<n\leq 6p+3$. The divisors of $2p$ are $1$, $2$, $p$ and $2p$ and so the only possible prime divisors of $n$ are $2$, $3$ and $2p+1$. If $n=2p+1$ or $4p+4$, then $\phi (n)=2p$ and there can be no other multiple $n$ of $2p+1$ such that $\phi(n)=2p$. If $n$ is not a multiple of $2p+1$ and $\phi (n)=2p$, then $n$ must be of the form $n=2^{\alpha}\cdot3^{\beta}$. If $\alpha \geq 3$, then $4|2p$, which is
impossible. Also, if $\beta \geq 2$, then $3|2p$. However, this is not possible,
because $p\geq 5$. Therefore $\alpha \leq 2$ and $\beta \leq 1$. As $n>2p\geq
10$, the only possibility is $n=12$. As $\phi (12)=4$, this is also impossible.
The result now follows.
\end{proof}

We can generalize this result to odd numbers in general. We will write $O(m)$
(resp. $E(m)$) for the number of odd (resp. even) solutions of the equation 
$\phi (n)=m$. 

\begin{theorem} \label{thm4a} If $s$ is odd and $s\geq 3$, then $O(2s)=E(2s)$. 
\end{theorem}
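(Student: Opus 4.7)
The plan is to exhibit a bijection between the odd and even elements of $\phi^{-1}(2s)$ via the doubling map $n\mapsto 2n$. By Corollary \ref{cor1a}, when $n$ is odd we have $\phi(2n)=\phi(n)$, so this map sends odd preimages of $2s$ to even preimages of $2s$, and it is manifestly injective. This immediately yields $O(2s)\leq E(2s)$.

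The substantive work is the reverse direction: showing that every even $n\in \phi^{-1}(2s)$ actually arises as $2m$ for some odd $m\in \phi^{-1}(2s)$. I would write any such $n$ as $n=2^k m$ with $m$ odd and $k\geq 1$, and reduce the problem to proving $k=1$; once that is done, $\phi(m)=\phi(n)=2s$ by Corollary \ref{cor1a} again, and surjectivity follows.

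The one real obstacle is ruling out $k\geq 2$, which is a short $2$-adic parity argument. When $k\geq 2$, multiplicativity gives $\phi(n)=2^{k-1}\phi(m)=2s$. If $m\geq 3$, then by the elementary facts recalled in Section 1 the value $\phi(m)$ is even, say $\phi(m)=2t$, so $2^{k-1}\cdot 2t=2s$ and hence $s=2^{k-1}t$ is even, contradicting the hypothesis that $s$ is odd. If $m=1$, then $2^{k-1}=2s$, forcing $s=2^{k-2}$, which again contradicts $s$ odd with $s\geq 3$. In either case $k=1$, so $n=2m$ with $m$ odd, and the doubling map is surjective.

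I expect this parity case analysis to be the whole of the argument; it is also where the hypothesis $s\geq 3$ is sharp. For $s=1$ the element $n=4\in\phi^{-1}(2)$ corresponds precisely to the forbidden configuration $k=2$, $m=1$: a genuine even solution not obtained by doubling an odd one. Accordingly one has $\phi^{-1}(2)=\{3,4,6\}$ with $O(2)=1\neq 2=E(2)$, confirming that the bound $s\geq 3$ cannot be dropped.
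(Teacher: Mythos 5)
Your proof is correct and takes essentially the same route as the paper's: both establish $O(2s)\leq E(2s)$ via the doubling map and then write an even solution as $2^{\alpha}t$ with $t$ odd, using the parity of $\phi(t)$ (and the separate case $t=1$) to force $\alpha=1$. The only differences are cosmetic (order of the case analysis and your added remark on the sharpness at $s=1$).
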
 

\begin{proof} If the equation $\phi (n)=2s$ has no solution, then 
there is nothing to prove, so suppose that this is not the case. If $n$ is an 
odd solution of the equation, then $2n$ is also a solution. Hence $O(2s)\leq E(2s)$.\\
If $n$ is an even solution, then we may write $n=2^{\alpha}t$, with $\alpha \geq 1$ and $t$ odd. If $t=1$, then $\phi (n)$ is a power of $2$, which is not possible. It follows that $t\geq 3$. If we now suppose that $\alpha >1$, then $\phi (n)= 2^{\alpha -1}\phi (t)$ and, as $\phi (t)$ is even, $4$ divides $\phi (n)=2s$, which is not possible. Therefore $\alpha =1$ and so $n=2t$. As $\phi (t)=\phi (2t)=\phi (n)$, we must have $O(2s)\geq E(2s)$ and the result now follows.
\end{proof}

\begin{corollary} There is an infinity of numbers $m$ such that $\phi ^{-1}(m)$
is non-empty and composed of an equal number of odd and even numbers.
\end{corollary}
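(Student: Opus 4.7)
The plan is to combine Theorem \ref{thm4a} with the construction already established at the end of Section 4. Theorem \ref{thm4a} tells us that whenever $s$ is odd and $s\geq 3$, the set $\phi^{-1}(2s)$ automatically has $O(2s)=E(2s)$. So all that remains is to exhibit infinitely many distinct odd integers $s\geq 3$ for which $\phi^{-1}(2s)$ is non-empty.

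For this, I would directly reuse the Dirichlet argument given just before Section 5. Dirichlet's theorem produces infinitely many primes $p$ with $p\equiv 3\pmod 4$. Writing $p = 3+4k = 1+2(1+2k)$, we obtain $\phi(p) = p-1 = 2(1+2k)$, so $s := 1+2k$ is odd and $2s\in \Img\phi$. Restricting attention to primes $p\geq 7$ (which still leaves infinitely many by Dirichlet) ensures $s = (p-1)/2 \geq 3$. Different primes $p$ yield different values of $s$, so we obtain infinitely many distinct odd integers $s\geq 3$ with $\phi^{-1}(2s)\neq \emptyset$.

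Setting $m=2s$ for each such $s$, Theorem \ref{thm4a} gives $O(m)=E(m)$, and by construction $\phi^{-1}(m)$ is non-empty (it contains at least $p$ and $2p$). This produces the required infinite family.

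There is essentially no obstacle here beyond bookkeeping: both ingredients, namely Theorem \ref{thm4a} and the Dirichlet-based supply of odd $s$ with $2s\in\Img\phi$, are already in hand. The only small point to check is the lower bound $s\geq 3$ required by Theorem \ref{thm4a}, which is handled by discarding the single prime $p=3$ (giving $s=1$) from the infinite list provided by Dirichlet.
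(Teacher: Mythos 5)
Your proof is correct and follows essentially the same route as the paper: the paper's own proof simply cites the Dirichlet-based supply of primes $p=1+2s$ with $s$ odd and invokes Theorem \ref{thm4a}. Your added care in discarding $p=3$ to ensure $s\geq 3$ is a small point the paper leaves implicit, but the argument is the same.
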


\begin{proof} We have seen that there is an infinite number of primes $p$ such that $p=1+2s$, with $s$ odd. The result now follows.
\end{proof}

\begin{remark} Simple calculations show that $\phi ^{-1}(28)=\{29, 58\}$. This shows that the converses of Proposition \ref{prop4a} and Theorem \ref{thm4a} are not true.
\end{remark}

At the other extreme is the case where $\phi ^{-1}(m)$ is non-empty and
composed entirely of even numbers. In considering this question, the following
result due to Gupta \cite{gupta}, is useful. We will give a modified proof of
it.

\begin{theorem} \label{th5a} $O(2^k)=0$ or $O(2^k)=1$. 
\end{theorem}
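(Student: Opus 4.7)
The plan is to show that any odd $n$ with $\phi(n)=2^k$ must be a squarefree product of \emph{distinct} Fermat primes, and then to use uniqueness of the binary expansion of $k$ to pin down which Fermat primes occur. This gives at most one odd preimage.

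First I would take an odd $n\in\phi^{-1}(2^k)$ and write its prime factorization $n=p_1^{\alpha_1}\cdots p_s^{\alpha_s}$ with all $p_i$ odd. Then
$$
\phi(n)=\prod_{i=1}^s p_i^{\alpha_i-1}(p_i-1)=2^k.
$$
Since each $p_i$ is odd and the right-hand side is a power of $2$, the factors $p_i^{\alpha_i-1}$ must all equal $1$; hence $\alpha_i=1$ for every $i$ and $n$ is squarefree. In the same way, each $p_i-1$ must be a power of $2$, so each $p_i$ is a Fermat prime, i.e.\ $p_i=F_{j_i}=2^{2^{j_i}}+1$ for some $j_i\in\N$.

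Next I would use the fact that the primes $p_i$ are distinct to conclude that the exponents $j_i$ are distinct. Substituting gives
$$
2^k=\phi(n)=\prod_{i=1}^s (F_{j_i}-1)=\prod_{i=1}^s 2^{2^{j_i}}=2^{\,\sum_{i=1}^s 2^{j_i}},
$$
so $k=\sum_{i=1}^s 2^{j_i}$ is a representation of $k$ as a sum of distinct powers of $2$. By uniqueness of binary expansions, the set $\{j_1,\dots,j_s\}$ is determined by $k$ alone; hence $n=\prod_{i=1}^s F_{j_i}$ is uniquely determined whenever it exists. This gives $O(2^k)\leq 1$, with equality exactly when every Fermat number $F_j$ indexed by a $1$-bit of $k$ happens to be prime.

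There is no real obstacle here: the key observation is simply that a power of $2$ admits only one decomposition into distinct powers of $2$, which forces the odd preimage to be read off from the binary digits of $k$. The only care needed is the initial reduction showing $n$ is squarefree and that each prime factor is Fermat, both of which follow immediately from $\phi(n)=2^k$ having no odd prime divisors.
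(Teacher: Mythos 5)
Your proposal is correct and follows essentially the same route as the paper: reduce to the observation that an odd preimage of $2^k$ must be a squarefree product of distinct Fermat primes, whence $k=\sum 2^{j_i}$ and uniqueness of the binary expansion of $k$ forces uniqueness of $n$. The only cosmetic difference is that the paper indexes over all Fermat numbers $F_0,\dots,F_r$ with exponents $\alpha_i\in\{0,1\}$ while you index only the primes actually dividing $n$; the content, including the criterion for $O(2^k)=1$ versus $0$, is identical.
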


\begin{proof} As $\phi ^{-1}(1)=\{1,2\}$ and $\phi ^{-1}(2)=\{3,4,6\}$
the result is true for $k=1$ and $k=2$. Suppose now that $k>1$ and that 
$\phi (n)=2^k$, with $n$ odd. If $p$ is an odd prime such that $p-1$ divides 
$2^k$, then $p$ must be of the form $2^i+1$. The only primes of this form are 
Fermat numbers, hence if $n=p_0^{\alpha _1}\ldots p_r^{\alpha _r}$ is the 
decomposition of $n$ as a product of primes, then each $p_i$ must be a Fermat 
number. If we allow $\alpha _i=0$, then we may suppose that $p_i=F_i$. Thus
$$
2^k = \phi (F_0^{\alpha _0})\ldots \phi (F_r^{\alpha _r})=\prod _{\alpha _i\geq
1}F_i^{\alpha _i-1}(F_i-1).
$$
If $\alpha _i>1$ for some $i$, then the product is not a power of $2$, hence we
must have $\alpha _i=0$ or $\alpha _i=1$, for all $i$, and so 
$$
2^k = \alpha _02^{2^0}\ldots \alpha _r2^{2^r}.
$$
Clearly $\alpha _0\ldots \alpha _r$ is $k$ written in binary form. Therefore 
there can be at most one odd number $n$ such that $\phi (n)=2^k$. If 
$\alpha _i=1$ only when $F_i$ is prime, then
there exists $n$ odd such that $\phi (n)=2^k$ and $O(2^k)=1$. On the other hand,
if there is an $\alpha _i$ such that $F_i$ is not prime, then there does not
exist an odd number $n$ such that $\phi (n)=2^k$.
\end{proof}

If $k<32$, then $k$ can be written in binary form as
$\alpha _0\ldots \alpha _4$. As $F_0,\ldots F_4$ are all primes, there exists an
odd number $n$ such that $\phi (n)=2^k$. However, for $2^{32}$, this is not the
case, because $F_{5}$ is not prime. Up till now no Fermat number $F_n$, with
$n\geq 5$, has found to be prime, so it would seem that, for $k\geq 5$, there is
no odd number in the set $\phi ^{-1}(2^k)$. This suggests that there is 
an infinity of numbers $m$ for which $\phi ^{-1}(m)$ is non-empty and only
composed of even numbers.\\

In considering the structure of a set $\phi ^{-1}(m)$, we may focus attention on the congruence classes modulo $m$ of elements in $\phi ^{-1}(m)$. It is natural to ask whether there is an element $n\in \phi ^{-1}(m)$ which is congruent to $1(\mod m)$. This is always the case if $m+1$ is prime, because then $m+1\in \phi ^{-1}(m)$. However, if $m+1$ is composite, then this is probably not true. If $m+1$ is composite, $n\equiv 1(\mod m)$ and $n\in \phi ^{-1}(m)$, then there exists $k$ such that
$$
n-1 = km = k\phi (n).
$$
The problem of solving this equation for $n$ composite was first considered by Lehmer \cite{lehmer}. He found no such $n$ and showed that any solution must be odd, square-free and have at least $7$ prime factors. Up till now no solution has been found and the lower bound has been pushed much higher. (A good discussion of this problem may be found in \cite{grytczuk}.) For this reason it is most likely that $\phi ^{-1}(m)$ contains no $n\equiv 1 (\mod m)$, when $m+1$ is composite.

\section{Further remarks} If $k\in \N$, then $2.3^k\in \Img \phi$, because $\phi (3^{k+1})=2.3^k$. However, if $p$ is an odd prime other than $3$, then it is not necessarily so that $2p^k\in \Img \phi$.

\begin{proposition} If $p$ is an odd prime other than $3$ and $k\in \N$, then $2p^k\in \Img \phi$ if and only if $2p^k+1$ is a prime number, $2p^k+1\equiv 3 (\mod 4)$ and $k$ is odd.
\end{proposition}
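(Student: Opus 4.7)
The plan is to mimic the argument for $k=1$ given in Theorem \ref{thm3a}, but with careful bookkeeping of the $2$-adic and $p$-adic valuations of $\phi(n)$ to handle the extra $p$-power. The easy direction is immediate: if $2p^k+1$ is prime, then $\phi(2p^k+1)=2p^k$, so $2p^k\in\Img\phi$. The congruence $2p^k+1\equiv 3 (\mod 4)$ and the parity of $k$ will be derived at the end as automatic consequences of $2p^k+1$ being prime.

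For the nontrivial direction, suppose $\phi(n)=2p^k$. First I would rule out $p\mid n$: if $p\mid n$, then $p-1\mid\phi(n)=2p^k$, and since $\gcd(p-1,p)=1$ this forces $p-1\mid 2$, contradicting $p\geq 5$. For any other odd prime $q\mid n$, the condition $q-1\mid 2p^k$ combined with $q-1$ being even forces $q-1\in\{2,2p,\ldots,2p^k\}$, so $q\in\{3\}\cup\{2p^j+1:1\leq j\leq k\}$. Moreover each such $q$ occurs with multiplicity at most one in $n$: if $q^2\mid n$ then $q\mid\phi(n)=2p^k$, forcing $q\in\{2,p\}$, which is excluded. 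Hence
$$
n = 2^a\cdot 3^c\cdot\prod_{j\in S}(2p^j+1),
$$
with $a\geq 0$, $c\in\{0,1\}$, and $S\subseteq\{1,\ldots,k\}$ a set of indices $j$ for which $2p^j+1$ is prime. Comparing $p$-adic and $2$-adic valuations of $\phi(n)=2p^k$ yields $\sum_{j\in S}j=k$ and $\max(a-1,0)+c+|S|=1$. Since $k\geq 1$ forces $|S|\geq 1$, the second equation then pins down $|S|=1$, $c=0$, $a\in\{0,1\}$, and finally $S=\{k\}$; hence $2p^k+1$ is prime and $n\in\{2p^k+1,\,2(2p^k+1)\}$.

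Once $2p^k+1$ is known to be prime, the two numerical conditions fall out by modular arithmetic. Since $p^k$ is odd, $2p^k\equiv 2 (\mod 4)$, giving $2p^k+1\equiv 3 (\mod 4)$. For the parity of $k$, note that $p\not\equiv 0 (\mod 3)$; if either $p\equiv 1 (\mod 3)$ or $k$ is even, then $p^k\equiv 1 (\mod 3)$, so $3\mid 2p^k+1$, contradicting that $2p^k+1$ is prime and greater than $3$. Therefore $p\equiv 2 (\mod 3)$ and $k$ is odd. The hardest step is the structural paragraph: one must simultaneously control both the $2$-adic and $p$-adic valuations of $\phi(n)$ to squeeze out $S=\{k\}$, and the case $a=0$ introduces a mild irregularity (since $\phi(2^0)=1$ rather than $2^{a-1}$) that needs separate attention.
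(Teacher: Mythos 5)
Your proof is correct, and its overall strategy matches the paper's: show that the odd part of any $n$ with $\phi(n)=2p^k$ must be the single prime $2p^k+1$, then derive the congruence and parity conditions by arithmetic modulo $4$ and modulo $3$ (your last paragraph is essentially identical to the paper's). The difference lies in how the structural step is executed. The paper simply asserts that $2p^k=q^l(q-1)$ for some prime $q$ and $l\in\N$ --- that is, that $n$ has at most one odd prime factor and $4\nmid n$ --- leaving the reader to supply the underlying $2$-adic argument, and then disposes of the case $q=p$, $l\geq 1$ via the equation $2=p^{l-k}(p-1)$. You instead rule out $p\mid n$ at the outset (from $p-1\mid 2p^k$ and $\gcd(p-1,p)=1$), enumerate the admissible odd primes $3,\,2p+1,\ldots,2p^k+1$, show each is square-free in $n$, and pin down the factorization by comparing $2$-adic and $p$-adic valuations. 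Your version is longer but actually fills the gap in the paper's unproved existence claim, and as a by-product identifies the full preimage $\phi^{-1}(2p^k)=\{2p^k+1,\ 2(2p^k+1)\}$ when it is nonempty, in the spirit of Proposition \ref{prop4a}. One pedantic caveat, shared with the paper: the statement requires $k\geq 1$, since for $k=0$ one has $2=\phi(3)\in\Img\phi$ while $k$ is even; your argument correctly uses $k\geq 1$ when concluding $|S|\geq 1$, so no harm is done.
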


\noindent \textsc{proof} If the conditions are satisfied, then clearly $2p^k\in \Img \phi$.

Now suppose that $2p^k\in \Img \phi$. There exists a prime number $q$ and a number $l\in \N$ such that $2p^k = q^l(q-1)$. If $l\neq 0$, then $p=q$. As $p^k|p^l(p-1)$ and $p^k$ and $p-1$ are coprime, $p^k|p^l$. This implies that $2=p^{l-k}(p-1)$ and it follows that $l=k$ and $p-1=2$. However, this contradicts the fact that $p\neq 3$. We have shown that $l=0$ and hence that $2p^k+1$ is prime. 

If $2p^k+1\equiv 1 (\mod 4)$, then $2p^k\equiv 0(\mod 4)$. It follows that $2|p^k$, which is clearly impossible. Therefore $2p^k+1\equiv 3 (\mod 4)$.

If $p\equiv 1 (\mod 3)$, then $2p^k+1\equiv 0 (\mod 3)$ and so $2p^k+1$ is not prime. Hence $p\equiv 2 (\mod 3)$. However, if $k$ is even, then $2p^k+1\equiv 0 (\mod 3)$ and $2p^k+1$ is not prime. It follows that $k$ is odd.\bx\\

Clearly this proposition generalizes Theorem \ref{thm3a}.

\section{Conclusion} We have seen that the preimage $\phi ^{-1}(m)$ is an empty set for any odd positive integer $m>1$ and also for an infinite number of even positive integers. We have also seen that, when $\phi ^{-1}(m)$ is non-empty,     $\phi ^{-1}(m)$ can have at most half of its members odd; it is also possible that all its members are even. Up till now no number $m$ has been found such that $\phi ^{-1}(m)$ contains only one element. Carmichael conjectured that such a case does not exist, but this is yet to be proved (or disproved). However, for $k\geq 2$, Ford \cite{ford} has shown that there is a number $m$ such $\phi ^{-1}(m)$ contains precisely $k$ elements.\\
   
\section{Acknowlegements} 

I would like to thank Dominique Duval and Mohamed El Methni for reading the text and offering helpful suggestions. I would also like to express my thanks to Alexei Pantchichkine, with whom I have had several useful conversations.\\

\end{document}